\documentclass[12pt]{amsart}
\topmargin -1.3cm
\textwidth 160mm
\textheight 230mm
\oddsidemargin 0.6cm
\evensidemargin 0.6cm
\usepackage[utf8]{inputenc}
\usepackage{graphicx}
\usepackage{epstopdf}
\usepackage{inputenc}
\usepackage{enumitem}
\usepackage{amsmath}
\usepackage{amssymb}
\usepackage{mathtools}
\usepackage[usenames,dvipsnames,x11names,svgnames]{xcolor}
\usepackage[colorlinks=true,linkcolor=NavyBlue,citecolor=DarkGreen, urlcolor=blue]{hyperref}
\usepackage{dsfont}

\newtheorem{theorem}{Theorem}
\newtheorem*{theorem*}{Theorem}

\newtheorem{lemma}{Lemma}

\newtheorem*{acknowledgements*}{Acknowledgements}

\newtheorem*{dichotomy}{Dichotomy}

\newcommand{\mb}{\mathbb}
\newcommand{\mc}{\mathcal}
\newcommand{\mf}{\mathfrak}

\newcommand{\leqs}{\leqslant }
\newcommand{\geqs}{\geqslant }

\makeatletter
\def\blfootnote{\gdef\@thefnmark{}\@footnotetext}
\makeatother

\def\house#1{\setbox1=\hbox{$\,#1\,$}%
\dimen1=\ht1 \advance\dimen1 by 2pt \dimen2=\dp1 \advance\dimen2 by 2pt
\setbox1=\hbox{\vrule height\dimen1 depth\dimen2\box1\vrule}%
\setbox1=\vbox{\hrule\box1}%
\advance\dimen1 by .4pt \ht1=\dimen1
\advance\dimen2 by .4pt \dp1=\dimen2 \box1\relax}

\begin{document}
	\title[A dichotomy for extreme values of zeta and Dirichlet $L$-functions]{A dichotomy for extreme values of \\ zeta and Dirichlet $L$-functions}

\author[A. Bondarenko]{Andriy Bondarenko}
\address{Department of Mathematical Sciences, Norwegian University of Science and Technology (NTNU), 7491 Trondheim, Norway} 
\email{andriybond@gmail.com}

\author[P. Darbar]{Pranendu Darbar}
\address{Department of Mathematical Sciences, Norwegian University of Science and Technology (NTNU), 7491 Trondheim, Norway} 
\email{darbarpranendu100@gmail.com}

\author[M.~V.~Hagen]{Markus V. Hagen}
\address{Department of Mathematical Sciences, Norwegian University of Science and Technology (NTNU), 7491 Trondheim, Norway} 
\email{markus.v.hagen@ntnu.no}

\author[W. Heap]{Winston Heap}
\address{Department of Mathematics, Shandong University, Jinan, Shandong 250100, China}
\email{winstonheap@gmail.com}

\author[K. Seip]{Kristian Seip}
\address{Department of Mathematical Sciences, Norwegian University of Science and Technology (NTNU), 7491 Trondheim, Norway} 
\email{kristian.seip@ntnu.no}

\thanks{Research supported in part by Grant 275113 of the Research Council of Norway.
The work of Darbar is 
funded by that grant through the Alain
Bensoussan Fellowship Programme of the European Research Consortium for Informatics and Mathematics.}

	\maketitle
	
	\begin{abstract}
	We exhibit large values of the Dedekind zeta function of a cyclotomic field on the critical line. This implies a dichotomy whereby one either has improved lower bounds for the maximum of  the Riemann zeta function, or large values of Dirichlet $L$-functions on the level of the Bondarenko--Seip bound.
	\end{abstract}
		
	\section{introduction}
	
	Extreme values play a key role in the value distribution theory of $L$-functions. In \cite{Sound res}, Soundararajan introduced a versatile method for obtaining large values of $L$-functions in a variety of families. Applied to the Riemann zeta function, this gave 
	\[
	\max_{t\in [T,2T]}|\zeta(\tfrac12+it)|\geqs \exp\bigg((1+o(1))\sqrt{\frac{\log T}{\log\log T}}\bigg),
	\]
	improving on previous results of Balasubramanian--Ramachandra \cite{BR}.  By combining a modification of Soundararajan's version of the resonance method along with the burgeoning connections with GCD sums and insights of Aistleitner \cite{A}, the first and fifth authors showed \cite{BS1} that 
	\[
	\max_{t\in[0,T]}|\zeta(\tfrac12+it)|
	\geqs 
	\exp\bigg(\big(\frac{1}{\sqrt{2}}+o(1)\big)\sqrt{\frac{\log T\log\log\log T}{\log\log T}}\bigg).
	\]
	The constant $1/\sqrt{2}$ was subsequently improved to $1$ in \cite{BS2} with the current best being $\sqrt{2}$ due to de la Bret\`eche--Tenenbaum \cite{dlBT}. 
	
	These techniques have since been applied in a variety of settings \cite{AMM, C, CM1, CM2, Y}. A severe constraint, however, is that the $L$-functions under consideration must have positive coefficients. This excludes many $L$-functions of interest and so, for instance, it is not known whether Dirichlet $L$-functions exhibit such large values. Currently, Soundararajan's resonance method gives the best known bounds 
	\[ \max_{t\in[T,2T]}|L(1/2+it,\chi)|\geqs  \exp\bigg((1+o(1))\sqrt{\frac{\log T}{\log\log T}}\bigg)\] for non-principal Dirichlet characters $\chi$.  
	
	A plentiful source of $L$-functions with positive coefficients are provided by the Dedekind zeta functions. In this paper, we consider the Dedekind zeta function $	\zeta_\mb{K}(s)$ attached to a cyclotomic field $\mb{K}=\mb{Q}(\omega_q)$ with $\omega_q $ a $q$th root of unity for $q>2$. 
	Of special interest is the factorisation 
	\begin{equation}\label{ded factor}
	\zeta_\mb{K}(s)=\zeta(s)\prod_{\chi\neq \chi_0\!\!\!\!\pmod{q}}L(s, \chi'),
	\end{equation}		
	\vspace{-0.1cm}where the product is over all non-principal Dirichlet characters $\chi$ modulo $q$ and $\chi'$ is the character which induces $\chi$ if $\chi$ is not primitive and $\chi^\prime=\chi$ otherwise. We will establish the following bound which thus yields an assertion about the \emph{interplay} between extreme values of the functions $L(s,\chi)$. 
	
	\begin{theorem}\label{main theorem}
	Let  $\mb{K}=\mb{Q}(\omega_q)$ and $A$ be an arbitrary positive number. If $T$ is sufficiently large, then uniformly for $q\ll (\log_2 T)^A$, 
	\begin{equation} \label{eq:mainest}
	\max_{t\in[0,T]}|\zeta_\mb{K}(\tfrac12+it)|\geqs \exp\left(\bigg(1+o(1)\bigg)\sqrt{\phi(q)}\sqrt{\frac{\log T \log \log \log T}{\log \log T}}\right).
	\end{equation}
	\end{theorem}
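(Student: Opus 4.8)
The plan is to run the resonance method of Soundararajan and Bondarenko--Seip \cite{Sound res, BS1, BS2}, which is available because $\zeta_\mb K(s)=\sum_{n\geqs 1}a_n n^{-s}$ has non-negative coefficients $a_n=\#\{\mf a\subseteq\mc O_\mb K:N\mf a=n\}$. The one extra arithmetic fact we need, besides $a_n\geqs 0$, is that $a_\ell=\phi(q)^{\omega(\ell)}$ for every squarefree $\ell$ whose prime factors are all $\equiv 1\Mod q$: such a prime $p$ splits completely in $\mb K=\mb Q(\omega_q)$, so its local Euler factor is $(1-p^{-s})^{-\phi(q)}$.

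First I would reduce \eqref{eq:mainest} to the resonance inequality
\[
\max_{t\in[0,T]}|\zeta_\mb K(\tfrac12+it)|\ \geqs\ (1+o(1))\,\frac{\displaystyle\sum_{h\geqs 1}\frac{a_h}{\sqrt h}\sum_{\substack{j\in\mc M,\ hj\in\mc M}}r(j)\,r(hj)}{\displaystyle\sum_{j\in\mc M}r(j)^2},
\]
valid for every finite $\mc M\subset[1,N]\cap\mb Z$ and every $r\colon\mc M\to[0,\infty)$, with $N$ allowed to be as large as $T^{1-o(1)}$. With $R(t)=\sum_{n\in\mc M}r(n)n^{-it}$ one bounds the left-hand side below by the ratio of $\int\zeta_\mb K(\tfrac12+it)|R(t)|^2w(t)\,dt$ to $\int|R(t)|^2w(t)\,dt$ for a suitable non-negative weight $w$ localising near $[0,T]$, and in the numerator one substitutes the smoothed series $\sum_n a_n n^{-1/2-it}e^{-n/X}$ with $X$ a large enough power of $T$ (so that the Mellin contour shift past $s=0$ leaves only a residue at $s=1$, carrying the exponentially small factor $\Gamma(\tfrac12-it)$, and a remainder controlled by the convexity bound $\zeta_\mb K(\tfrac14+it)\ll(q(1+|t|))^{O(\phi(q))}$). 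Expanding $|R(t)|^2=\sum_{j,k\in\mc M}r(j)r(k)(k/j)^{it}$ and integrating, the off-diagonal contribution vanishes when $N\leqs T^{1/2-\varepsilon}$ and, by the long-resonator argument of \cite{BS2}, stays negligible up to $N=T^{1-o(1)}$ (this is exactly what saves the factor $1/\sqrt2$); the surviving diagonal $k=nj$ together with $a_n\geqs 0$ yields the inequality.

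Next I would choose $\mc M$ and $r$. Let $\mc P$ consist of the primes $p\equiv1\Mod q$ lying in the prime ranges used by the Bondarenko--Seip construction \cite{BS2}, \emph{dilated by a factor $\asymp\phi(q)$}; since such primes have relative density $1/\phi(q)$, the set $\mc P$ has the same cardinality and, up to the harmless shift $\log\phi(q)$, the same logarithmic distribution as in the classical case, so that the associated $\mc M$ (bounded-length squarefree products of primes of $\mc P$) and multiplicative weight $r$ still satisfy $\mc M\subset[1,N]$ with $N=T^{1-o(1)}$. Every quotient $h=hj/j$ occurring above is squarefree with prime factors in $\mc P$, hence $a_h=\phi(q)^{\omega(h)}$; dropping the constraint $hj\in\mc M$ (which, exactly as in \cite{BS2}, costs only $1+o(1)$ in the exponent) turns the ratio into $\prod_{p\in\mc P}\bigl(1+\phi(q)\,r(p)\,p^{-1/2}(1+r(p)^2)^{-1}\bigr)$. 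The factor $\sqrt{\phi(q)}$ now appears: taking $r(p)\equiv1$ gives $\exp\bigl((1+o(1))\tfrac12\phi(q)\sum_{p\in\mc P}p^{-1/2}\bigr)$, and since $\mc P$ runs over the primes $\equiv1\Mod q$ up to a scale of order $\phi(q)\log N$,
\[
\phi(q)\sum_{p\in\mc P}p^{-1/2}\ \sim\ \sum_{p\leqs\phi(q)\log N}p^{-1/2}\ \sim\ \frac{2\sqrt{\phi(q)\log N}}{\log\log N}\ =\ 2\sqrt{\phi(q)}\;\frac{\sqrt{\log N}}{\log\log N}
\]
by the prime number theorem for the progression $1\Mod q$. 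This already reproduces Soundararajan's bound with the extra $\sqrt{\phi(q)}$; carrying out instead the optimal choice of $r$ and $\mc M$ from \cite{BS2} (equivalently, transplanting their optimal GCD-sum configuration) promotes $\sqrt{\log N}/\log\log N$ to $\sqrt{\log N\,\log\log\log N/\log\log N}$ with the sharp constant $1$, and $N=T^{1-o(1)}$ then gives \eqref{eq:mainest}.

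The main obstacle is this last step: one must reproduce the delicate optimization of \cite{BS2}---in which the resonator primes occupy finely tuned ranges traded off against the size constraint $\mc M\subset[1,N]$---now with every prime pinned to the class $1\Mod q$ and with the weight $\phi(q)^{\omega(h)}$ inserted, and check that the net effect is merely to replace $\log N$ by $\phi(q)\log N$ while preserving the constant $1$. Two things need attention: the error terms in the prime number theorem in progressions must be negligible, for which Siegel--Walfisz suffices since the moduli and prime ranges obey $q\leqs(\log Y)^{O(1)}$; and the uniformity of the $1+o(1)$ over $q\ll(\log_2T)^A$ rests on $\log\phi(q)=o(\log_2T)$, which is precisely this hypothesis. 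By contrast the pole of $\zeta_\mb K$ at $s=1$ and the large conductor $q^{\phi(q)}T^{\phi(q)}$ cause no trouble, being absorbed respectively by the $\Gamma$-factor decay and by taking $X$ a sufficiently large power of $T$.
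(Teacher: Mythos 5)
Your proposal identifies exactly the key arithmetic/analytic insight that drives the paper's proof: because the primes $p\equiv1\pmod q$ have relative density $1/\phi(q)$, one can dilate the Bondarenko--Seip prime ranges by a factor $\phi(q)$ and inflate the resonator coefficients by a factor $c_q=\sqrt{\phi(q)}$ while keeping the resonator support size $\leqs N$; the weight $a(p)=\phi(q)$ then cancels against the $1/\phi(q)$ density, giving the resonance quotient an extra $c_q=\sqrt{\phi(q)}$ in the exponent. This is precisely what the paper does (Lemmas~1--3), with the constraint $c_q\leqs\sqrt{\phi(q)}$ appearing exactly where you predict it should (in the pruning of $\mathcal M_q$).

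Where you diverge technically is in the regularisation of $\zeta_{\mathbb K}$. You propose the Mellin/exponential-smoothing device $\sum_n a_n n^{-1/2-it}e^{-n/X}$ with $X$ a ``sufficiently large power of $T$'' to absorb the convexity growth $(q(1+|t|))^{O(\phi(q))}$ at $\mathrm{Re}\,s=1/4$, so that effectively $X=T^{\Theta(\phi(q))}$, and you wave away the pole at $s=1$ via ``$\Gamma$-factor decay''. The paper avoids this by taking a bandlimited convolution kernel $K_{\mathfrak n}(u)=\sin^{2\mathfrak n}((\epsilon\log T/\mathfrak n)u)/((\epsilon\log T/\mathfrak n)^{2\mathfrak n-1}u^{2\mathfrak n})$ with $\mathfrak n=2\phi(q)$: the Fourier transform is supported on $|v|\leqs2\epsilon\log T$ independently of $q$, so the implicit Dirichlet polynomial stays of length $T^{2\epsilon}$, and the polynomial decay of order $u^{-4\phi(q)}$ beats the convexity growth $|t+u|^{\phi(q)/4}$ in the integral tails. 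Two points in your sketch would need repair before it could stand on its own. First, ``absorbed by $\Gamma$-factor decay'' is misleading: $\Gamma(\tfrac12-it)$ is $O(1)$ near $t=0$, and the residue term $\mathrm{Res}\cdot X^{1/2-it}\Gamma(\tfrac12-it)$ must be controlled after integration against $|R(t)|^2\Phi(t/T)$, where the relevant cancellation comes from the oscillation $X^{-it}$ rather than from $\Gamma$-decay; this is not automatic and has to be verified against the main term $T\cdot A_{N,q}\sum f^2$ with $A_{N,q}$ only sub-polynomial. Second, your warm-up with $r(p)\equiv1$ gives $\exp\bigl(c\sqrt{\phi(q)}\sqrt{\log N}/\log_2 N\bigr)$, which is not ``Soundararajan's bound with the extra $\sqrt{\phi(q)}$'' --- Soundararajan's coefficients $r(p)\asymp\sqrt{\log N\log_2N}/(\sqrt p\log p)$ are already essential to get $\sqrt{\log N/\log_2 N}$, so the constant-coefficient heuristic understates what is needed. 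These are repairable, but they are precisely the places where the paper does genuine work: it re-derives the BS1 pruning lemma for the dilated primes (Siegel--Walfisz enters here, uniformly in $q\ll(\log_2 T)^A$), and it designs $K_{\mathfrak n}$ and proves Lemma~5 specifically so that the residue and the tail integrals are $o(T\sum f^2)$ despite the $\phi(q)$-fold growth of $\zeta_{\mathbb K}$.
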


Thanks to the extra factor $\sqrt{\phi(q)}$ in the exponent on the right-hand side of \eqref{eq:mainest}, we obtain the following consequence of Theorem \ref{main theorem} and the factorisation \eqref{ded factor}. 

\begin{dichotomy}Either there exist $L(s,\chi)$ for non-principal Dirichlet characters $\chi$ satisfying 
\[
	\max_{t\in[0,T]}|L(\tfrac12+it,\chi)|
	\geqs 
	\exp\bigg(c\sqrt{\frac{\log T\log\log\log T}{\log\log T}}\bigg)
\] 
for some sufficiently small $c$, or we can exhibit even larger values of $|\zeta(\tfrac12+it)|$.
\end{dichotomy}


As an extreme possible event, if  $L(\tfrac{1}{2}+it,\chi)$ is less than  $\exp(c\sqrt{\log T/\log\log T})$ on $[0,T]$ for all non-principal Dirichlet characters $\chi$ modulo $q$ and some prime $q\sim \frac{1}{4c^2}\log\log\log T$, then we will have  
\[ \max_{t\in [0,T]} |\zeta(\tfrac{1}{2}+it)| \geqs  \exp\bigg(\big(\frac{1}{4c}+o(1)\big) \sqrt{\frac{\log T}{\log\log  T}}\times \log\log\log T\bigg).\]
Perhaps more likely is that Dirichlet $L$-functions actually \emph{do} obtain large values with an extra power $\sqrt{\log\log\log T}$, and that Theorem~\ref{main theorem} is picking out  simultaneous occurrences of this.   

Previously, large values of the Dedekind zeta function of a general field $\mb{K}$ were given by Li in \cite[Thm. 1.1.1]{Li thesis}. Her results guaranteed the existence of arbitrarily large $t$ such that
	\begin{equation} \label{eq:li}
	|\zeta_\mb{K}(\tfrac12+it)|\geqs \exp\left(c\sqrt{\frac{\log t \log \log \log t}{\log \log t}}\right),
	\end{equation}
where one could take any $c<1/\sqrt{2}$ for Galois extensions and any $c<1/(\sqrt{2}[\mb{K}:\mb{Q}])$ in general. However, with these bounds one cannot deduce our dichotomy. 

Our improvement of the constant in \eqref{eq:li} is afforded by the following observation. Note that by \eqref{ded factor} we may write $\zeta_{\mb{Q}(\omega_q)}(s)=\sum_{n\geqs 1} a(n)n^{-s}$ where for primes $p\nmid q$ we have 	
	\[
a(p)=\left\{
\begin{array}
[c]{ll}
\phi(q) & \text{if}\; \, p\equiv 1 \pmod q,\\
0 & \text{otherwise}. 
\end{array}
\right.
\]
For the purposes of this discussion we may ignore primes $p|q$ since they are only finite in number. On applying the resonance method we obtain a lower bound for the maximum which is roughly of the form 
\[
\prod_{p\equiv 1(\!\!\!\!\!\!\mod q)}\bigg(1+\frac{\phi(q)r(p)}{p^{1/2}}\bigg),
\]	
where $r$ are the resonator coefficients. The fact that $$\phi(q)\sum_{p\equiv 1(\!\!\!\!\!\!\mod q),\,p\leqs x}1\sim \sum_{p\leqs x}1$$ means that this lower bound is essentially $\prod_p (1+r(p)/p^{1/2})$, i.e.\,\,we are in the same situation as for the Riemann zeta function and nothing seems to have been gained. However, the fact that our resonator only needs to be supported on a smaller set of primes, $p\equiv 1 (\mod q)$, allows us to take it larger whilst still matching the other constraints of the argument. Precisely, we can take it larger by  a factor of $\sqrt{\phi(q)}$. In order to balance this, we need to take larger primes than usual.

It is likely that the methods of de la Bret\`eche--Tenenbaum \cite{dlBT} can improve the exponent on the right hand side of \eqref{eq:mainest} by a factor of $\sqrt{2}$. However, this would not affect our dichotomy in any essential way and so in the interests of keeping our exposition simpler we have not pursued this line of inquiry.

We close this introduction by mentioning the possibility of extending our result to other Dedekind zeta functions. To this end, let $L/\mathbb{Q}$ be a Galois extension. Then the $p$th coefficient of $\zeta_L(s)$ is $[L:\mathbb{Q}]$ if $p$ splits completely in $L$. The density of the primes that split completely in $L$ is $1/[L:\mathbb{Q}]$ by Chebotarev's density theorem. This should be thought of as the analogue of $1/\phi(q)$ in the Siegel--Walfisz theorem in our case. It seems plausible then that one should be able to establish a counterpart to our main theorem, with constant $\sqrt{[L:\mathbb{Q}]}$ instead of $\sqrt{\phi(q)}$ by modifying the resonator coefficients slightly. When $L/\mathbb{Q}$ is Galois, $\zeta_L(s)$ factors into a product of Artin $L$-function according to the decomposition of the regular representation of $\text{Gal}(L/\mathbb{Q})$ into irreducible representations. We could then get a similar dichotomy as in the cyclotomic case. In particular, when $\text{Gal}(L/\mathbb{Q})$ is abelian, Dirichlet $L$-functions would be replaced by Hecke $L$-functions. For the non-abelian case this is more subtle in general. 

This paper contains two additional sections. We prepare for the proof of Theorem~\ref{main theorem} in the next section by developing the required novel extremal GCD-type sums. The actual proof of Theorem~\ref{main theorem} is carried out in Section~\ref{proof:thm1}. 

We will in what follows use the notations $\log_2 x\coloneqq \log\log x$ and
$\log_3 x\coloneqq \log\log\log x$, and we will use the convention that
\[ \widehat{f}(t)\coloneqq \int_{\mathbb R} f(x) e^{-itx} dx \]
for $f$ an integrable function on $\mathbb R$.
	
	\section{Extremal GCD-type sums}
	
	We will now construct the extremal versions of the sums that appear in the resonance method. We follow the scheme of  \cite[Sec. 2]{BS1} closely, the main differences being that we need to account for the coefficients of the Dedekind zeta function and that we are picking primes that are congruent to $1$ modulo $q$.
	
	Let $0<\gamma<1$ be a parameter to be chosen later and $P_q$ the set of all primes $p$ such that 
	\[
	p\equiv 1 \pmod q \quad \text{ and } \quad e\phi(q)\log N \log_2 N<p\leqs \phi(q)\log N \exp\left((\log_2 N)^\gamma\right) \log_2 N.
	\] 
	Here $N$ is a large integer to be chosen later as $N=\lfloor T^\eta\rfloor$. 
For any $c_q\geqs 1$, we define the multiplicative function $f(n)$ to be supported on the set of square-free numbers such that
	\[
	f(p)=\bigg\{
	\begin{array}
	[c]{ll}
	c_q \sqrt{\frac{\log N \log_2 N}{\log_3 N}} \frac{1}{\sqrt{p}(\log p -\log_2 N-\log_3 N-\log\phi(q))}& \text{if}\; \, p\in P_q,\\
	0 & \text{otherwise}.
	\end{array}
	\]
	Eventually we will take $c_q=\sqrt{\phi(q)}$, but we keep it general for the time being.
	Let $P_{k, q}$ be the set of all primes $p$ such that 
	\[ p\equiv 1 \pmod q \quad \text{and} \quad \phi(q) e^k\log N \log_2 N<p\leqs \phi(q) e^{k+1} \log N \log_2 N\] for $k=1, \ldots, \lfloor(\log_2 N)^{\gamma}\rfloor$. 
	Fix $a$ satisfying $1<a<1/\gamma$. Let ${M}_{k, q}$ be the set of those integers having at least $\frac{a \log N}{k^2  \log_3 N}$ prime divisors in $P_{k, q}$. Also let ${M}_{k, q}'$ be the set of integers from ${M}_{k, q}$ that have prime divisors only in $P_{k, q}$. Set
	\begin{align*}
	\mc{M}_q\coloneqq  \text{supp}(f)\setminus \cup_{k=1}^{[(\log_2 N)^{\gamma}]}M_{k, q}. 
	\end{align*}
	\begin{lemma}
		We have $|\mathcal{M}_q|\leqs N$ uniformly for $q\ll (\log_2 N)^A$. 
	\end{lemma}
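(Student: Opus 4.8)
The plan is to bound $|\mathcal M_q|$ by a direct counting argument, splitting integers according to their prime factors in the blocks $P_{k,q}$. Write $K\coloneqq\lfloor(\log_2 N)^{\gamma}\rfloor$ and note that the half-open intervals defining $P_{1,q},\dots,P_{K,q}$ are pairwise disjoint and their union contains the interval defining $P_q$ (the lower endpoints agree, and $K+1>(\log_2 N)^{\gamma}$). Hence every $n\in\mathrm{supp}(f)$ factors uniquely as $n=\prod_{k=1}^{K}m_k$ with each $m_k$ square-free and supported on $P_{k,q}$, and, directly from the definitions, $n\in\mathcal M_q$ forces $\omega(m_k)<V_k\coloneqq\frac{a\log N}{k^{2}\log_3 N}$ for every $k$, where $\omega$ counts prime divisors. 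Since distinct tuples $(m_1,\dots,m_K)$ give distinct $n$, this yields
\[
|\mathcal M_q|\ \leqs\ \prod_{k=1}^{K}\#\bigl\{m\ \text{square-free}:\ \mathrm{supp}(m)\subseteq P_{k,q},\ \omega(m)<V_k\bigr\}\ =\ \prod_{k=1}^{K}\ \sum_{0\leqs j<V_k}\binom{|P_{k,q}|}{j}.
\]

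Second, I would pin down $|P_{k,q}|$. Since $q\ll(\log_2 N)^{A}$, the primes in $P_{k,q}$ lie in the Siegel--Walfisz range, and both endpoints of the interval cutting out $P_{k,q}$ have logarithm $(1+o(1))\log_2 N$, uniformly for $1\leqs k\leqs K$ and $q\ll(\log_2 N)^{A}$; here one uses $k\leqs(\log_2 N)^{\gamma}=o(\log_2 N)$ and $\log\phi(q)=O(\log q)=O(\log_3 N)=o(\log_2 N)$. Subtracting the two values of $\pi(\,\cdot\,;q,1)$ then gives $|P_{k,q}|=(e-1+o(1))\,e^{k}\log N$ for $1\leqs k\leqs K$, uniformly in the stated ranges. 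In particular $V_k\geqs1$ and $V_k/|P_{k,q}|\to0$ for every such $k$ once $N$ is large, so the elementary estimate $\sum_{0\leqs j<V}\binom{n}{j}\leqs(en/V)^{V}$ (valid for $1\leqs V\leqs n$) together with $e|P_{k,q}|/V_k\asymp e^{k}k^{2}\log_3 N$ yields, writing $\log_4 N\coloneqq\log\log\log\log N$,
\[
\log|\mathcal M_q|\ \leqs\ \sum_{k=1}^{K}V_k\log\!\frac{e\,|P_{k,q}|}{V_k}\ \leqs\ \sum_{k=1}^{K}V_k\bigl(k+\log_4 N+O(\log(k+1))\bigr).
\]

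Third, I would evaluate this sum. The main term is $\sum_{k=1}^{K}V_k\cdot k=\frac{a\log N}{\log_3 N}\sum_{k=1}^{K}\frac1k=\frac{a\log N}{\log_3 N}\bigl(\log K+O(1)\bigr)=(\gamma a+o(1))\log N$, using $\log K=\gamma\log_3 N+O(1)$. All remaining contributions are at most $\bigl(\log_4 N+O(1)\bigr)\sum_{k\geqs1}V_k=O\!\bigl(\tfrac{\log N\,\log_4 N}{\log_3 N}\bigr)=o(\log N)$, since $\sum_{k\geqs1}V_k=\frac{a\log N}{\log_3 N}\sum_{k\geqs1}k^{-2}$ and $\sum_{k\geqs1}k^{-2}\log(k+1)$ both converge. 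Hence $\log|\mathcal M_q|\leqs(\gamma a+o(1))\log N$, and as $1<a<1/\gamma$ gives $\gamma a<1$, we conclude $|\mathcal M_q|\leqs N$ for all sufficiently large $N$, uniformly for $q\ll(\log_2 N)^{A}$.

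The step I expect to be the main obstacle is the uniform estimate for $|P_{k,q}|$: one must check that the Siegel--Walfisz error is negligible against the main term $e^{k}\log N$ simultaneously for all $k\leqs(\log_2 N)^{\gamma}$ and all $q\ll(\log_2 N)^{A}$, and that the factors $e^{k}$, $\phi(q)$, $\log N$, $\log_2 N$ appearing in the endpoints of $P_{k,q}$ combine so that $\log(\text{endpoint})=(1+o(1))\log_2 N$ throughout. Once this input is in place the rest is a routine adaptation of \cite[Sec.~2]{BS1}, the only new features being the factor $\phi(q)$ inside the interval definitions and the restriction to the residue class $1\bmod q$, both of which are absorbed by the Siegel--Walfisz estimate.
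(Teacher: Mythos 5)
Your proposal is correct and follows the same route as the paper: bound $|\mathcal M_q|$ by the product over blocks $P_{k,q}$ of partial binomial sums, control $|P_{k,q}|$ by Siegel--Walfisz (the only genuinely new input over \cite{BS1}, Lemma 2), and then estimate the resulting product to get $\log|\mathcal M_q|\leqs(\gamma a+o(1))\log N<\log N$. The paper's own proof states the product bound and the Siegel--Walfisz estimate and then defers the rest to \cite{BS1}; you have simply spelled out those deferred details (including the elementary bound $\sum_{j<V}\binom{n}{j}\leqs(en/V)^{V}$ and the summation over $k$), all correctly.
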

	\begin{proof}Note that
	\[
	|\mc{M}_q|\leqs \prod_{k=1}^{\lfloor(\log_2 N)^\gamma\rfloor} \sum_{j=1}^{\lfloor \frac{a \log N}{k^2  \log_3 N}\rfloor}
	\binom{|P_{k,q}|}{j}.
	\]
	By the Siegel--Walfisz theorem, we have 
	\[
	|P_{k,q}|
	 \leqs 
	 (1+o(1))e^{k+1}\log N
	\] 
	provided $q\leqs \log^A(\phi(q)e^{k+1}\log N\log_2 N)$ which is satisfied for $q\ll (\log_2 N)^A$. The remainder of the proof now follows directly that of Lemma 2 of \cite{BS1}. 
	\end{proof}
	
	For $\mb{K}=\mb{Q}(\omega_q)$ write 
	\[
	\zeta_\mb{K}(s)
	=
	\zeta(s)\prod_{\chi\neq \chi_0 \!\!\!\!\pmod{q}}L(s, \chi')
	=
	\sum_{n=1}^\infty \frac{a(n)}{n^s}
	\] 
	so that, as mentioned above, if $p\nmid q$ then 
	\[
	a(p)=\left\{
	\begin{array}
	[c]{ll}
	\phi(q) & \text{if}\; \, p\equiv 1 \pmod q,\\
	0 & \text{otherwise}.
	\end{array}
	\right.
	\]
	Since our resonator only interacts with $p\equiv 1\mod q$ we need not compute $a(p)$ on the ramified primes $p|q$. We now consider the quantity 
	\begin{align*}
	A_{N, q}&\coloneqq \frac{1}{\sum_{n\in \mathbb{N}}f(n)^2}\sum_{n\in \mathbb{N}} \frac{f(n)}{n^{1/2}} \sum_{\substack{d\mid n}}a(n/d)f(d)\sqrt{d}\\
	&=\prod_{p\in P_q}\frac{\big(1+f(p)p^{-1/2}(a(p)+f(p)p^{1/2})\big)}{1+f(p)^2}\\
	&= \prod_{p\in P_q}\frac{1+f(p)^2+\frac{\phi(q)f(p)}{p^{1/2}}}{1+f(p)^2}.
	\end{align*}

	\begin{lemma}\label{A lem}
	Suppose that $c_q\leqs \sqrt{\phi(q)}$ and $q\ll (\log_2 N)^A$. Then 
		\[
		A_{N, q}=\exp\left((\gamma c_q+o(1))\sqrt{\frac{\log N \log_3 N}{\log_2 N}}\right).
		\]
		\end{lemma}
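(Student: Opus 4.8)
The plan is to pass to logarithms and evaluate the resulting sum over primes $p\in P_q$ by partial summation together with the Siegel--Walfisz theorem, following closely the computation in \cite[Sec.~2]{BS1}. Starting from the product formula for $A_{N,q}$ derived above,
\[
\log A_{N,q}=\sum_{p\in P_q}\log\!\left(1+\frac{\phi(q)f(p)p^{-1/2}}{1+f(p)^2}\right),
\]
the first thing I would do is record a uniform smallness estimate for the summands. Using the lower bound $p>e\phi(q)\log N\log_2 N$ valid for $p\in P_q$ and the hypothesis $c_q\leqs\sqrt{\phi(q)}$, one gets $f(p)<c_q/\sqrt{e\phi(q)\log_3 N}\leqs 1/\sqrt{e\log_3 N}$ and likewise $\phi(q)f(p)p^{-1/2}\ll c_q(\log N\log_2 N\log_3 N)^{-1/2}$; since $c_q\leqs\sqrt{\phi(q)}\ll(\log_2 N)^{A/2}$, both quantities are $o(1)$ uniformly over $p\in P_q$ and over $q\ll(\log_2 N)^A$. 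Hence $\log(1+x)=x+O(x^2)$ applies with the $O(x^2)$ terms summing to $o(1)$ times the main term, and $1+f(p)^2=1+o(1)$, so
\[
\log A_{N,q}=(1+o(1))\sum_{p\in P_q}\frac{\phi(q)f(p)}{p^{1/2}}.
\]

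Next I would insert the definition of $f$, reducing matters to estimating the sum
\[
S\coloneqq\sum_{p\in P_q}\frac{1}{p\,(\log p-\log_2 N-\log_3 N-\log\phi(q))},
\]
the outstanding prefactor being $\phi(q)c_q\sqrt{\log N\log_2 N/\log_3 N}$. To handle $S$ I would use partial summation against $\sum_{p\equiv 1\,(q),\,p\leqs x}1=(1+o(1))\operatorname{Li}(x)/\phi(q)$, which holds uniformly for $q\ll(\log_2 N)^A$ on the range of $x$ in question — the same input as in the preceding lemma. This converts $S$ into
\[
\frac{1+o(1)}{\phi(q)}\int_{e\phi(q)\log N\log_2 N}^{\phi(q)\log N\exp((\log_2 N)^\gamma)\log_2 N}\frac{du}{u\log u\,(\log u-\log_2 N-\log_3 N-\log\phi(q))}.
\]
The substitution $v=\log u-\log_2 N-\log_3 N-\log\phi(q)$, under which $du/u=dv$ and $v$ runs over $[1,(\log_2 N)^\gamma]$, finishes the evaluation: since $v\leqs(\log_2 N)^\gamma=o(\log_2 N)$ and $\log\phi(q)=O(\log_3 N)=o(\log_2 N)$, one has $\log u=(1+o(1))\log_2 N$ throughout, so the integral equals
\[
\frac{1+o(1)}{\phi(q)\log_2 N}\int_1^{(\log_2 N)^\gamma}\frac{dv}{v}=\frac{(\gamma+o(1))\log_3 N}{\phi(q)\log_2 N}.
\]
Combining the displays gives
\[
\log A_{N,q}=(1+o(1))\,\phi(q)c_q\sqrt{\frac{\log N\log_2 N}{\log_3 N}}\cdot\frac{(\gamma+o(1))\log_3 N}{\phi(q)\log_2 N}=(\gamma c_q+o(1))\sqrt{\frac{\log N\log_3 N}{\log_2 N}},
\]
which is the asserted estimate.

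The step I expect to be the main obstacle is controlling the uniformity in $q$: one must know that the Siegel--Walfisz asymptotic for primes $\equiv 1\pmod q$ holds with relative error $o(1)$ \emph{uniformly} for $q\ll(\log_2 N)^A$ across the whole range of $p$ in $P_q$ (here $\log x\asymp\log_2 N$, so the error $x\,e^{-c\sqrt{\log x}}$ is comfortably beaten by the relevant power of $\log_2 N$), and one must check that all the $o(1)$'s in the argument — in particular the replacement $\log u=(1+o(1))\log_2 N$, which hinges on $\log\phi(q)=o(\log_2 N)$ — are uniform in $q$. Once this is secured, the rest is the routine partial-summation bookkeeping already present in \cite{BS1}; the only genuinely new feature is keeping track of the factor $\phi(q)$ together with the constraint $c_q\leqs\sqrt{\phi(q)}$, which is precisely what guarantees $f(p)=o(1)$ and hence the linearization of the logarithm.
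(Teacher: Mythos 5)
Your proposal is correct and follows essentially the same route as the paper: linearize the logarithm of the Euler-type product using the smallness of $f(p)$ (guaranteed by $c_q\leqs\sqrt{\phi(q)}$), then evaluate the resulting prime sum by Siegel--Walfisz plus partial summation, reducing to the integral that gives the factor $\gamma\log_3 N/(\phi(q)\log_2 N)$. The paper states these steps more tersely; you have merely filled in the substitution $v=\log u-\log_2 N-\log_3 N-\log\phi(q)$ and the uniformity checks in $q$, which is consistent with, not different from, the authors' argument.
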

	\begin{proof}
		Note that if $c_q\leqs \sqrt{\phi(q)}$, then $f(p)=o(1)$ for all $p\in P_q$. Thus
		\begin{align*}
		A_{N,q}=\exp\bigg((1+o(1))\sum_{\substack{p\in P_q}}\frac{\phi(q)f(p)}{p^{1/2}}\bigg).
		\end{align*}
		By the Siegel--Walfisz theorem along with partial summation, we have
		\begin{align*}
		\phi(q)\sum_{\substack{p\in P_q}}\frac{ f(p)}{p^{1/2}}
		\sim &
		\,\,c_q\int_{e\phi(q)\log N\log_2 N}^{\phi(q)\log N\exp((\log_2 N)^\gamma)\log_2 N}
		\!\!\!\!\!\!\frac{\sqrt{{\log N\log_2 N}/{\log_3 N}} }{x\log x(\log x-\log_2 N-\log_3 N-\log\phi(q))}dx
		\\
		= &
		(c_q \gamma+o(1))\sqrt{\frac{\log N \log_3 N}{\log_2 N}},
		\end{align*}
		again, provided $q\ll (\log_2 N)^A$.
		\end{proof}
	
	\begin{lemma}\label{restrict lem}
	Suppose that $c_q\leqs \sqrt{{\phi(q)}}$ and $q\ll (\log_2 N)^A$. Then 
	\begin{align*}
	\frac{1}{\sum_{n\in \mathbb{N}}f(n)^2}\sum_{\substack{n\in \mathbb{N}\\ n\notin \mc{M}_{q}}} \frac{f(n)}{n^{1/2}} \sum_{\substack{d\mid n}}a(n/d)f(d)\sqrt{d}=o(A_{N, q}), \quad N\to \infty.
	\end{align*}
	\end{lemma}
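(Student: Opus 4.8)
The plan is to mimic the proof of Lemma~3 in \cite{BS1}. Since $\mc{M}_q=\mathrm{supp}(f)\setminus\bigcup_k M_{k,q}$, an integer $n\notin\mc{M}_q$ either has $f(n)=0$ or lies in some $M_{k,q}$; as every summand on the left-hand side is nonnegative, that sum is bounded by $\sum_k S_k$, where
\[
S_k\coloneqq \frac{1}{\sum_{n}f(n)^2}\sum_{\substack{n\in M_{k,q}\\ f(n)\neq 0}}\frac{f(n)}{n^{1/2}}\sum_{d\mid n}a(n/d)f(d)\sqrt d,
\]
so it suffices to prove $\sum_{k=1}^{\lfloor(\log_2 N)^\gamma\rfloor}S_k=o(A_{N,q})$ as $N\to\infty$.

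First I would use multiplicativity to peel off the primes in $P_{k,q}$. Writing each $n\in\mathrm{supp}(f)$ uniquely as $n=m\ell$, where $\ell$ is built from the prime factors of $n$ in $P_{k,q}$ and $m$ from those in $P_q\setminus P_{k,q}$, the inner sum over $d\mid n$ factors; since $a$ is multiplicative with $a(p)=\phi(q)$ on $P_q$, the same expansion that yielded the product formula for $A_{N,q}$ gives
\[
\sum_{\substack{n\in M_{k,q}\\ f(n)\neq 0}}\frac{f(n)}{n^{1/2}}\sum_{d\mid n}a(n/d)f(d)\sqrt d=\prod_{p\in P_q\setminus P_{k,q}}(1+g(p))\sum_{\ell\in M_{k,q}'}\prod_{p\mid\ell}g(p),\qquad g(p)\coloneqq f(p)^2+\frac{\phi(q)f(p)}{\sqrt p},
\]
while $\sum_n f(n)^2=\prod_{p\in P_q}(1+f(p)^2)$. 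Dividing and recognising $A_{N,q}=\prod_{p\in P_q}\frac{1+g(p)}{1+f(p)^2}$, one obtains the identity
\[
S_k=A_{N,q}\cdot\frac{\sum_{\ell\in M_{k,q}'}\prod_{p\mid\ell}g(p)}{\prod_{p\in P_{k,q}}(1+g(p))}.
\]
The right-hand fraction is a tail probability — the chance that a random subset of $P_{k,q}$, in which each $p$ is included independently with probability $g(p)/(1+g(p))$, has at least $m_k\coloneqq \lceil a\log N/(k^2\log_3 N)\rceil$ elements — and the task reduces to showing it is small.

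To that end I would estimate the mean $G_k\coloneqq \sum_{p\in P_{k,q}}g(p)$. Since $\log p-\log_2 N-\log_3 N-\log\phi(q)>k$ for $p\in P_{k,q}$, and since the Siegel--Walfisz theorem together with partial summation yields $\sum_{p\in P_{k,q}}p^{-1}=(1+o(1))(\phi(q)\log_2 N)^{-1}$ uniformly for $q\ll(\log_2 N)^A$ and $1\leqs k\leqs(\log_2 N)^\gamma$, one checks that $\sum_{p\in P_{k,q}}\phi(q)f(p)p^{-1/2}=o\bigl(\sum_{p\in P_{k,q}}f(p)^2\bigr)$ and hence, using $c_q\leqs\sqrt{\phi(q)}$,
\[
G_k=(1+o(1))\sum_{p\in P_{k,q}}f(p)^2\leqs(1+o(1))\frac{c_q^2}{\phi(q)}\cdot\frac{\log N}{k^2\log_3 N}\leqs\frac{1+o(1)}{a}\,m_k.
\]
As $a>1$ is fixed, the mean is a definite factor below the threshold $m_k$, so a Rankin-type bound — multiply each $\ell\in M_{k,q}'$ by $a^{\omega(\ell)-m_k}\geqs 1$ and use $1+x\leqs e^x$ — gives
\[
\frac{\sum_{\ell\in M_{k,q}'}\prod_{p\mid\ell}g(p)}{\prod_{p\in P_{k,q}}(1+g(p))}\leqs a^{-m_k}\exp\bigl((a-1)G_k\bigr)\leqs\exp\Bigl(-(1+o(1))\bigl(\log a-1+\tfrac1a\bigr)\frac{a\log N}{k^2\log_3 N}\Bigr),
\]
with $\log a-1+1/a>0$. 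Summing over $1\leqs k\leqs\lfloor(\log_2 N)^\gamma\rfloor$, the dominant term is the one with the largest $k$, and since $\log N/\bigl((\log_2 N)^{2\gamma}(\log_3 N)^2\bigr)\to\infty$ the whole sum tends to $0$; hence $\sum_k S_k=o(A_{N,q})$, as required.

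The step requiring the most care is the uniform (in $q$) estimate $G_k\leqs(1+o(1))m_k/a$. The congruence restriction $p\equiv 1\Mod q$ costs a factor $1/\phi(q)$ in $\sum_{p\in P_{k,q}}p^{-1}$ — this is precisely where Siegel--Walfisz replaces the prime number theorem used in \cite{BS1} — but this loss is exactly compensated by the factor $c_q^2\leqs\phi(q)$ built into the resonator coefficients, so that $G_k$ ends up of the same size as the corresponding quantity in \cite{BS1}; the $\log\phi(q)$ terms appearing in $f(p)$ and in the definition of $P_{k,q}$ are harmless because $\log\phi(q)=O(\log_3 N)=o(\log_2 N)$. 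Once this bookkeeping is in place, the factorisation, the Rankin estimate and the summation over $k$ proceed exactly as in Lemma~3 of \cite{BS1}.
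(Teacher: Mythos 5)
Your proposal is correct and follows essentially the same route as the paper: decompose over $k$, use multiplicativity to express each $S_k$ as $A_{N,q}$ times a ratio over $P_{k,q}$, and control the latter with a Rankin-type bound and Siegel--Walfisz estimates for $\sum_{p\in P_{k,q}}f(p)^2$. The only organizational differences are that you absorb the term $\phi(q)f(p)/\sqrt p$ into $g(p)$ and show $\sum_p g(p)\sim\sum_p f(p)^2$ directly, and you fix the Rankin parameter at $a$, whereas the paper peels the factor $\prod_{p\in P_{k,q}}(1+\phi(q)/(f(p)\sqrt p))$ off separately (showing it is $\exp(o(\log N/(k^2\log_3 N)))$) and keeps a free parameter $b$ close to $1$; both are cosmetic variants of the same argument.
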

	
	\begin{proof}
		We begin with 
		\begin{align*}
	&	\frac{1}{A_{N, q} \sum_{n\in \mathbb{N}}f(n)^2}\sum_{\substack{n\in \mathbb{N}\\ n\notin \mathcal{M}_{q}}} \frac{f(n)}{n^{1/2}} \sum_{\substack{d\mid n}}a(n/d)f(d)\sqrt{d}\\
	&	\leqs
		\frac{1}{A_{N, q} \sum_{n\in \mathbb{N}}f(n)^2}\sum_{k=1}^{[(\log_2 N)^{\gamma}]}\sum_{\substack{n\in M_{k, q}}} \frac{f(n)}{n^{1/2}} \sum_{\substack{d\mid n}}a(n/d)f(d)\sqrt{d}\\
		& =\sum_{k=1}^{[(\log_2 N)^{\gamma}]}\frac{1}{\prod_{p\in P_{k, q}}\left(1+f(p)^2+f(p)p^{-1/2}\right)}\sum_{n\in M_{k, q}'}\frac{f(n)}{n^{1/2}}\sum_{d\mid n}a(n/d)f(d)\sqrt{d}\\
		& \leqs \sum_{k=1}^{[(\log_2 N)^{\gamma}]}\frac{1}{\prod_{p\in P_{k, q}}\left(1+f(p)^2\right)}\sum_{n\in M_{k, q}'}f(n)^2\prod_{p\in P_{k, q}}\left(1+\frac{\phi(q)}{f(p)p^{1/2}}\right)=\sum_{k=1}^{[(\log_2 N)^{\gamma}]}E_{k, q},
		\end{align*}
		say.
		For each $k=1, \ldots, \lfloor(\log_2 N)^{\gamma}\rfloor$, 
		\begin{align*}
		E_{k, q}\leqs \frac{1}{\prod_{p\in P_{k, q}}\left(1+f(p)^2\right)}\sum_{n\in M_{k, q}'} f(n)^2\prod_{\substack{p\in P_{k, q}}}\left(1+\frac{\phi(q)}{f(p)\sqrt{p}}\right).
		\end{align*}
		By the Siegel--Walfisz theorem, for $q\ll (\log_2 N)^A$, we have
		\begin{align*}
		\prod_{\substack{p\in P_{k, q}}}&\left(1+\frac{\phi(q)}{f(p)\sqrt{p}}\right)
		\\
		&=
		\prod_{p \in P_{k,q}}		
		\left(1+\frac{\phi(q)}{c_q}(\log p-\log_2 N-\log_3 N-\log\phi(q))\sqrt{\frac{\log_3 N}{\log N \log_2 N}}\right)
		\\
		&\leqs
		\exp\left(\frac{\phi(q)}{c_q} (k+1)e^{k+1}\sqrt{\frac{\log N \log_3 N}{\log_2 N}}\right)=\exp\left(o\left(\frac{\log N}{\log_3 N}\right)\frac{1}{k^2}\right)
		\end{align*} 
		with the latter bound following since $k\leqs (\log_2 N)^{\gamma}$ and $\phi(q)\ll (\log_2 N)^A$.  
		
		Since every number in $M_{k, q}'$ has at least $\frac{a\log N}{k^2 \log_3 N}$
		prime divisors and  $f(n)$ is a multiplicative function, for any $b>1$, we have
		\[
		\sum_{n\in M_{k, q}'} f(n)^2\leqs b^{-a\frac{\log N}{k^2  \log_3 N}}\prod_{\substack{p\in P_{k, q}}}\left(1+bf(p)^2\right).
		\]
		Hence
		\[
		\frac{1}{\prod_{p\in P_{k, q}}\left(1+f(p)^2\right)}\sum_{n\in M_{k, q}'}f(n)^2\leqs b^{-a\frac{\log N}{k^2 \log_3 N}}\exp\left(\sum_{\substack{p\in P_{k, q}}}(b-1)f(p)^2\right).
		\]
		Observe that by the Siegel--Walfisz theorem,
		\begin{align*}
		\sum_{\substack{p\in P_{k, q}}}f(p)^2
		= &
		c_q^2\frac{\log N\log_2 N}{\log_3 N}\sum_{p\in P_{k,q}}\frac{1}{p(\log p-\log_2 N-\log_3 N-\log\phi(q))^2}
		\\
		\leqs &(1+o(1)) \frac{c_q^2}{\phi(q)}\frac{\log N\log_2 N}{\log_3 N}\int_{e^k\phi(q)\log N\log_2 N}^{e^{k+1}\phi(q)\log N\log_2 N}
		\!\!\!\!\!\!\frac{1}{k^2 x\log x}dx
		\\
		\leqs &
		\frac{c_q^2}{\phi(q)}(1+o(1))\frac{\log N}{k^2 \log_3 N}.
		\end{align*}
		Combining all these estimates, we find that
		\[
		E_{k, q}\ll \exp\left(\left(\frac{c_q^2}{\phi(q)} (b-1)-a\log b+o(1)\right)\frac{\log N}{k^2 \log_3 N}\right).
		\]
		Since $c_q\leqs \sqrt{\phi(q)}$ and $a>1$, on taking $b$ sufficiently close to 1 the exponent is negative giving the result.
		\end{proof}

\section{Proof of Theorem \ref{main theorem}} \label{proof:thm1}
	We follow the setup from \cite[Sec. 3]{BS1}. Let $\mathfrak{J}_q$ be the set of integers $j$ such that 
	\[
	\left[(1+T^{-1})^j, (1+T^{-1})^{j+1}\right]\cap \mc{M}_q\neq \emptyset.
	\]
	Also let $m_j$ be the minimum of $\left[(1+T^{-1})^j, (1+T^{-1})^{j+1}\right]\cap \mc{M}_q$ for all $j$ in $\mathfrak{J}_q$. Set 
	\[
	\mc{M}_q'\coloneqq \left\{m_j\, :\, j\in \mathfrak{J}_q \right\},
	\]
	and for every $m_j\in \mc{M}_q'$ let
	\[
	r(m_j)\coloneqq\bigg(\sum_{\substack{n\in \mc{M}_q\\ (1-T^{-1})^{j-1}\leqs n\leqs (1+T^{-1})^{j+2}}}f(n)^2\bigg)^{1/2}.
	\]
	We take our resonator to be 
	\[
	R(t)\coloneqq\sum_{m\in\mc{M}_q'}r(m)m^{-it}.
	\]
	Set $N=[T^{\eta}]$ for some $0<\eta\leqs 1$, and $\Phi(t)\coloneqq e^{-t^2/2}$ so that 
	\[
	\widehat{\Phi}(t)
	\coloneqq \int_\mb{R} \Phi(x)e^{-itx}dx=\sqrt{2\pi}\Phi(t).
	\] 
	\begin{lemma}\label{convolution formula}
		Suppose that $1/2\leqs \sigma<1$ and let $K(x+iy)$ be an analytic function in the horizontal strip $\sigma-2\leqs y \leqs 0$ satisfying
		\[
		\max_{\sigma-2\leqs y \leqs 0}|K(x+iy)|=O\left(1/|x|^2\right), \, |x|\to \infty.
		\] 
		Then for all real $t$, we have
		\[
		\int_{-\infty}^{\infty}\zeta_\mb{K}(\sigma+i(t+u))K(u)du=\sum_{n=1}^{\infty}\frac{\widehat{K}(\log n) a(n)}{n^{\sigma+it}}+2\pi \mathrm{Res}_{s=1}\zeta_\mb{K}(s)K(-t-i(1-\sigma)).
		\]
		\end{lemma}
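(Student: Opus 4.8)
The plan is to prove this via contour shifting, the standard device for converting a smoothed integral of an $L$-function into a Dirichlet series plus a residue term. I would start from the Dirichlet series expansion $\zeta_\mb{K}(s)=\sum_{n\geqs 1}a(n)n^{-s}$, which converges absolutely for $\Re s>1$, and choose a horizontal contour at height $\Im s=\sigma-2$, i.e.\ consider the rectangle with horizontal sides $\Im s=0$ (through $\sigma+i(t+u)$) and $\Im s=\sigma-2$, and vertical sides at $\Re s=\pm X$ which we send to infinity. On the lower line $\sigma-2+i v$ we have $\Re(\sigma-2+iv)=$ well, I should be careful: here the integration variable $u$ is real and plays the role of a shift in the imaginary direction, so really we are integrating $\zeta_\mb{K}$ along the horizontal line $\Re s=\sigma$ and shifting that line down to $\Re s=\sigma-2$ — wait, no: writing $s=\sigma+i(t+u)$, as $u$ ranges over $\mb R$ the point $s$ traces the vertical line $\Re s=\sigma$. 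We want to move this to the line $\Re s=\sigma+2>1$, on which the Dirichlet series converges absolutely.

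So concretely, first I would substitute $s=\sigma+i(t+u)$ and recognize the integral as a contour integral of $\zeta_\mb{K}(s)K(s/i - \text{(something)})$ — more cleanly, set $w=u$ and treat $F(w)\coloneqq\zeta_\mb{K}(\sigma+i(t+w))K(w)$; then $\int_{\mb R}F(w)\,dw$ is an integral along $\Im w=0$. The hypothesis that $K(x+iy)$ is analytic for $\sigma-2\leqs y\leqs 0$ is exactly what is needed so that $w\mapsto \zeta_\mb{K}(\sigma+i(t+w))K(w)$ is analytic in the strip $0\leqs \Im w\leqs 2-\sigma$ (since then $\Im(\sigma+i(t+w))$ stays in... let me recompute — if $\Im w\in[0,2-\sigma]$ then $\Re(\sigma+i(t+w))=\sigma-\Im w\in[\sigma-(2-\sigma),\sigma]=[2\sigma-2,\sigma]$; for $\sigma\geqs 1/2$ this lower endpoint is $\geqs -1$, not good enough). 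The correct move: shift $w$ upward by $2$, to $\Im w=2-\sigma$ is wrong too; we want $\Re(\sigma+i(t+w))=\sigma-\Im w$ to equal $2$, so $\Im w=\sigma-2<0$. Hence shift the contour \emph{down} to $\Im w=\sigma-2$, which is where $K$ is assumed analytic — consistent with the stated strip $\sigma-2\leqs y\leqs 0$. Then I would pick up the residue of $\zeta_\mb{K}(s)$ at its unique simple pole $s=1$; the pole occurs at $\sigma+i(t+w)=1$, i.e.\ $w=-t+i(1-\sigma)$, contributing $2\pi i\cdot\operatorname{Res}$ times the residue of $K$... no, $K$ is analytic there, so the residue of $F$ at that point is $\big(\operatorname{Res}_{s=1}\zeta_\mb{K}(s)\big)\cdot\tfrac{1}{i}\cdot K\big(-t+i(1-\sigma)\big)$, and by the residue theorem $\int_{\mb R}F(w)dw = \int_{\Im w=\sigma-2}F(w)dw + 2\pi i\cdot\text{Res}$, with the $1/i$ converting $2\pi i$ to $2\pi$ — matching the claimed term $2\pi\,\operatorname{Res}_{s=1}\zeta_\mb{K}(s)\,K(-t-i(1-\sigma))$ up to the sign inside $K$, which I would track carefully (the discrepancy between $i(1-\sigma)$ and $-i(1-\sigma)$ should resolve once orientations and the direction of the shift are pinned down).

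Next, on the shifted line $\Im w=\sigma-2$ we have $\Re s=2$, so I may expand $\zeta_\mb{K}(s)=\sum_n a(n)n^{-s}$ and interchange sum and integral — justified by absolute convergence, using $a(n)\ll_\varepsilon n^\varepsilon$ (divisor-type bound, since $a$ is multiplicative with $a(p^k)$ bounded by the number of divisors appropriately) together with the decay $|K(w)|=O(1/|x|^2)$ which makes $\int |F|<\infty$ on the line. This yields $\sum_n a(n)n^{-\sigma-it}\int_{\Im w=\sigma-2}n^{-iw}K(w)\,dw$. Finally I would identify the inner integral: by shifting \emph{back} to $\Im w=0$ (now with no pole of $n^{-iw}K(w)$ in between, as $K$ is analytic in the strip and $n^{-iw}$ is entire) it equals $\int_{\mb R}n^{-iu}K(u)\,du=\int_{\mb R}e^{-iu\log n}K(u)\,du=\widehat K(\log n)$, using the paper's convention $\widehat f(t)=\int_{\mb R}f(x)e^{-itx}dx$. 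Combining gives $\sum_n \widehat K(\log n)a(n)n^{-\sigma-it}+2\pi\operatorname{Res}_{s=1}\zeta_\mb{K}(s)K(-t-i(1-\sigma))$, as claimed.

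The main obstacle is the bookkeeping: getting the direction of the contour shift, the sign conventions in the Fourier transform, and the precise argument of $K$ in the residue term all mutually consistent, and rigorously justifying the vanishing of the contributions from the vertical segments $\Re w=\pm X$ as $X\to\infty$ (this uses the hypothesized $O(1/|x|^2)$ decay of $K$ in the whole strip, combined with a crude convexity bound $\zeta_\mb{K}(s)\ll (1+|t|)^C$ in the region $\sigma-2\leqs \Im w\leqs 0$, which suffices since the polynomial growth of $\zeta_\mb{K}$ is killed by the quadratic decay of $K$). The analytic input on $\zeta_\mb{K}$ needed is minimal — only meromorphy with a single simple pole at $s=1$ and polynomial growth on vertical lines in a bounded strip — so the real work is purely in organizing the Cauchy/Fubini argument cleanly.
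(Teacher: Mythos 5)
Your proposal is, in essence, the proof the paper is pointing at: the paper's proof of Lemma~\ref{convolution formula} is a one-line citation to Lemma~1 of \cite{BS2}, which is exactly this contour-shift argument. Writing $F(w)=\zeta_\mb{K}(\sigma+i(t+w))K(w)$, shift from $\Im w=0$ down to $\Im w=\sigma-2$ (so that $\Re\bigl(\sigma+i(t+w)\bigr)=2$), pick up the residue at the simple pole of $\zeta_\mb{K}$, expand the Dirichlet series, interchange sum and integral, and shift each inner integral back to $\Im w=0$ to recognise $\widehat{K}(\log n)$. The ingredients you list for rigour --- absolute convergence on $\Re s=2$, a divisor bound on $a(n)$, the decay $K=O(1/|x|^2)$ in the whole strip and a crude polynomial growth bound on $\zeta_\mb{K}$ there to kill the vertical segments, and $|n^{-iw}|\leqs 1$ for $\sigma-2\leqs\Im w\leqs 0$ to allow the back-shift --- are the right ones. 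So your route is correct and matches the cited proof.

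Two bookkeeping points. First, your tentative pole location is off by a sign: from $\sigma+i(t+w_0)=1$ one gets $t+w_0=-i(1-\sigma)$, hence $w_0=-t-i(1-\sigma)$, which is precisely the argument appearing in the lemma (and lies in the strip since $\sigma-2<\sigma-1\leqs 0$); there is therefore no residual discrepancy to resolve in the argument of $K$. Second, carrying the orientation through as you set it up --- counterclockwise around the rectangle with top $\Im w=0$ and bottom $\Im w=\sigma-2$, so $\int_{\mathrm{bottom}}-\int_{\mathrm{top}}+(\text{sides})=2\pi i\,\mathrm{Res}_{w=w_0}F$, together with $\mathrm{Res}_{w=w_0}F=\tfrac{1}{i}\,\mathrm{Res}_{s=1}\zeta_\mb{K}(s)\cdot K(w_0)$ since $\tfrac{d}{dw}\bigl(\sigma+i(t+w)\bigr)=i$ --- yields $\int_{\mathbb R}F=\sum_n\widehat{K}(\log n)a(n)n^{-\sigma-it}-2\pi\,\mathrm{Res}_{s=1}\zeta_\mb{K}(s)\,K(-t-i(1-\sigma))$, i.e.\ a \emph{minus} sign on the residue term, not the $+$ printed in the lemma. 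You flagged that the sign bookkeeping would need care; it does, and it is worth actually completing it rather than asserting a match. For the paper's application the discrepancy is immaterial, since the residue contribution is shown to be negligible, but a clean proof should land on the correct sign.
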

	\begin{proof}
	This follows exactly as in Lemma 1 of \cite{BS2}.
	\end{proof}
	
	In \cite{BS2}, where the Riemann zeta function was considered, this was applied with $K(u)={\sin^2(u\epsilon \log T)}/{u^2\epsilon \log T}$ which had the sufficient decay properties for large $u$. In our case, since the Dedekind zeta function can potentially be much larger, we take 
	\begin{equation}\label{K weight}
	K_\mf{n}(u)\coloneqq\frac{\sin^{2\mf{n}}((\tfrac{1}{\mf{n}}\epsilon \log T)u)}{(\tfrac{1}{\mf{n}}\epsilon \log T)^{2\mf{n}-1}u^{2\mf{n}}},
	\end{equation} 
	with $\epsilon$ small and large $\mf{n}\in\mb{N}$ to be chosen. We summarise some properties of the Fourier transform in the following lemma. 
	
	\begin{lemma}\label{K trans lem}Let $K_\mf{n}(u)$ be as above. Then 
	$\widehat{K_\mf{n}}(v)$ is a real, even function supported on $|v|\leqs 2\epsilon \log T$ satisfying 
	$ 0\leqs \widehat{K_\mf{n}}(v)\leqs \widehat{K_\mf{n}}(0) $ and being decreasing on $[0,\infty)$ with
	\begin{equation}
	\label{K der bound}
	\big|\frac{d}{dv}\widehat{K_\mf{n}}(v)\big|  \leqs \frac{\widehat{K_{\mf{n}-1}}(0)}{\tfrac{1}{\mf{n}}\epsilon \log T}.
	\end{equation}
	Furthermore, for large $\mf{n}$ we have 
	\begin{equation}\label{K0 asymp}
	\widehat{K_\mf{n}}(0)\sim \sqrt{\frac{3\pi}{\mf{n}}}.
	\end{equation}
	\end{lemma}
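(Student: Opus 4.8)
The plan is to exploit the fact that $K_\mf{n}(u)$ is, up to the normalising constant $(\tfrac1{\mf{n}}\epsilon\log T)^{-(2\mf{n}-1)}$, the $2\mf{n}$-th power of the kernel $\sin(\beta u)/(\beta u)\cdot\beta$ with $\beta=\tfrac1{\mf{n}}\epsilon\log T$, whose Fourier transform is the normalised indicator of $[-\beta,\beta]$. Concretely, write $K_\mf{n}(u)=\beta\cdot\bigl(\tfrac{\sin(\beta u)}{\beta u}\bigr)^{2\mf{n}}$. Since $\widehat{\mathbf 1_{[-\beta,\beta]}/(2\beta)}(v)=\sin(\beta v)/(\beta v)$, Fourier inversion gives $\widehat{\bigl(\tfrac{\sin(\beta u)}{\beta u}\bigr)}(v)=\tfrac{\pi}{\beta}\mathbf 1_{[-\beta,\beta]}(v)$, and multiplication of functions corresponds to convolution of transforms. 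Hence $\widehat{K_\mf{n}}$ is, up to a positive constant, the $2\mf{n}$-fold convolution of $\mathbf 1_{[-\beta,\beta]}$ with itself. From this representation essentially every claimed property drops out: it is real and even because $K_\mf{n}$ is; it is supported in $|v|\leqs 2\mf{n}\beta=2\epsilon\log T$; it is nonnegative because a self-convolution of an even function of the form $g*g$ (here $g=\mathbf 1_{[-\beta,\beta]}^{*\mf{n}}$) is the square of something in the sense that $\widehat{K_\mf{n}}(v)=c\,(h*h)(v)$ with $h=\mathbf 1_{[-\beta,\beta]}^{*\mf{n}}$ even, so $\widehat{K_\mf{n}}(v)=c\int h(w)h(v-w)\,dw\geqs 0$; and unimodality (decreasing on $[0,\infty)$ with maximum at $0$) follows since the convolution of symmetric unimodal functions is symmetric unimodal, and $\mathbf 1_{[-\beta,\beta]}$ is symmetric unimodal.

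For the derivative bound \eqref{K der bound}: differentiating the convolution identity $\widehat{K_\mf{n}}=c_\mf{n}\,\mathbf 1_{[-\beta,\beta]}^{*2\mf{n}}$ and using $\tfrac{d}{dv}\mathbf 1_{[-\beta,\beta]}^{*2\mf{n}}=\bigl(\tfrac{d}{dv}\mathbf 1_{[-\beta,\beta]}\bigr)*\mathbf 1_{[-\beta,\beta]}^{*(2\mf{n}-1)}=(\delta_{-\beta}-\delta_{\beta})*\mathbf 1_{[-\beta,\beta]}^{*(2\mf{n}-1)}$, one sees $|\tfrac{d}{dv}\widehat{K_\mf{n}}(v)|\leqs$ (constant)$\cdot\mathbf 1_{[-\beta,\beta]}^{*(2\mf{n}-1)}$ evaluated at two points, which is at most its value at $0$. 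Tracking the constants $c_\mf{n}$ (which is simply $\beta\cdot(\pi/\beta)^{2\mf{n}}/(2\beta)^{2\mf{n}}$ times the normalisation, the point being that the $\mf{n}$-dependence in the ratio telescopes correctly) shows that this is exactly $\widehat{K_{\mf{n}-1}}(0)/\beta=\widehat{K_{\mf{n}-1}}(0)/(\tfrac1{\mf{n}}\epsilon\log T)$; alternatively, and perhaps cleaner, note directly from $K_\mf{n}(u)=\tfrac1{\tfrac1{\mf{n}}\epsilon\log T}\,\sin^2(\tfrac1{\mf{n}}\epsilon u\log T)\,K_{\mf{n}-1}(u)$ in the form of Fourier transforms that $\widehat{K_\mf{n}}$ is a fixed average of shifts of $\widehat{K_{\mf{n}-1}}$ over a window of length $\tfrac2{\mf{n}}\epsilon\log T$, whence the total-variation bound \eqref{K der bound}. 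The asymptotic \eqref{K0 asymp} is a Laplace/central-limit computation: $\widehat{K_\mf{n}}(0)=c_\mf{n}\cdot\bigl(\mathbf 1_{[-\beta,\beta]}^{*2\mf{n}}\bigr)(0)$, and since the sum of $2\mf{n}$ i.i.d.\ uniforms on $[-\beta,\beta]$ has variance $2\mf{n}\cdot\beta^2/3$ and its density at $0$ is by the local CLT asymptotic to $1/\sqrt{2\pi\cdot 2\mf{n}\beta^2/3}$, one gets after inserting $c_\mf{n}$ that $\widehat{K_\mf{n}}(0)\sim\sqrt{3\pi/\mf{n}}$; equivalently one can compute $\widehat{K_\mf{n}}(0)=\int K_\mf{n}=\tfrac1{\tfrac1{\mf{n}}\epsilon\log T}\int\bigl(\tfrac{\sin x}{x}\bigr)^{2\mf{n}}dx$ after the substitution $x=\beta u$, and use the classical estimate $\int_{\mb R}(\sin x/x)^{2\mf{n}}\,dx\sim\sqrt{3\pi/\mf{n}}$ (which itself is the same CLT computation for $\mf{n}$ uniforms, noting $(\sin x/x)^2$ is, up to scaling, the density of a sum of two uniforms).

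The main obstacle is bookkeeping rather than conceptual: one must be careful that the scaling constants appearing in $K_\mf{n}$ are exactly those for which the telescoping in \eqref{K der bound} produces $\widehat{K_{\mf{n}-1}}(0)$ with no extra $\mf{n}$-dependent factor — the factor $\tfrac1{\mf{n}}$ in the argument $\tfrac1{\mf{n}}\epsilon\log T$ is precisely what makes the support (and hence the constants) behave additively in $\mf{n}$, so the identity $K_\mf{n}(u)=(\tfrac1{\mf{n}}\epsilon\log T)^{-1}\sin^2(\tfrac1{\mf{n}}\epsilon u\log T)K_{\mf{n}-1}(u)$ must be checked against the definition \eqref{K weight}. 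Once the convolution picture is set up correctly, all six assertions of the lemma are immediate consequences of elementary properties of convolutions of indicators and a single local central limit estimate.
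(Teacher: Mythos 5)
Your overall approach — recognizing $\widehat{K_\mf{n}}$ as a rescaled $2\mf{n}$-fold self-convolution of the indicator of an interval and reading off the support, nonnegativity, and unimodality from that picture — is the same one the paper uses, and your treatment of the asymptotic \eqref{K0 asymp} via $\widehat{K_\mf{n}}(0)=\int_{\mathbb R}(\sin x/x)^{2\mf{n}}\,dx$ and a Laplace/CLT computation matches the paper's Taylor-expansion argument. Those parts are fine.

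The one place where you diverge is the derivative bound \eqref{K der bound}, and there your write-up has a genuine error in the ``cleaner'' alternative and an unverified step in the main route. The claimed identity $K_\mf{n}(u)=\tfrac{1}{\tfrac1{\mf n}\epsilon\log T}\sin^2(\tfrac1{\mf n}\epsilon u\log T)\,K_{\mf n-1}(u)$ is false: $K_\mf{n}$ and $K_{\mf n-1}$ are built with \emph{different} scaling parameters $\beta_\mf{n}=\tfrac1{\mf n}\epsilon\log T$ and $\beta_{\mf n-1}=\tfrac1{\mf n-1}\epsilon\log T$, and even if you used the same $\beta$ the powers of $u$ do not match (you would need an extra $u^{-2}$). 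So $\widehat{K_\mf{n}}$ is not an average of shifts of $\widehat{K_{\mf n-1}}$. Your primary route, differentiating $\widehat{K_\mf{n}}=c_\mf{n}\,\mathbf 1_{[-\beta_\mf{n},\beta_\mf{n}]}^{*2\mf n}$, does work, but it does not ``exactly'' produce $\widehat{K_{\mf n-1}}(0)/\beta_\mf{n}$: you get a constant times $\mathbf 1_{[-\beta_\mf{n},\beta_\mf{n}]}^{*(2\mf n-1)}(0)$, which involves an odd number of convolutions and the parameter $\beta_\mf{n}$, whereas $\widehat{K_{\mf n-1}}(0)$ is built from $2\mf n-2$ convolutions with parameter $\beta_{\mf n-1}$. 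After normalising to $\mathbf 1_{[-1,1]}$, the comparison you actually need is $\mathbf 1_{[-1,1]}^{*(2\mf n-1)}(0)\leqs 4\,\mathbf 1_{[-1,1]}^{*(2\mf n-2)}(0)$, which is true (one has $\mathbf 1_{[-1,1]}^{*(m+1)}(0)\leqs 2\,\mathbf 1_{[-1,1]}^{*m}(0)$ since the $m$-fold convolution is maximised at $0$), but it is a step you would need to spell out. The paper sidesteps all of this by a direct estimate: $|\widehat{K_\mf{n}}'(v)|\leqs\int_{\mathbb R}|u|K_\mf{n}(u)\,du=\tfrac1{\beta_\mf{n}}\int_{\mathbb R}|\sin x|^{2\mf n}/|x|^{2\mf n-1}\,dx$, and then $\sin^2x/|x|\leqs 1$ gives $\leqs\tfrac1{\beta_\mf{n}}\int_{\mathbb R}(\sin x/x)^{2\mf n-2}\,dx=\widehat{K_{\mf n-1}}(0)/\beta_\mf{n}$, using that this last integral is $\beta$-independent. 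That route is shorter and avoids the bookkeeping you flagged as the main obstacle; I would adopt it.
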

	\begin{proof}
	The support condition, the nonnegativity, and the monotonicity follow from the convolution theorem and the fact that the Fourier transform of $\sin x/x$ is $\tfrac{1}{\pi}\chi_{[-1,1]}(t)$. 	
	For the bounds on the derivative, we have 
	\[
	\big|\frac{d}{dv}\widehat{K_\mf{n}}(v)\big|\leqs \int_\mb{R} \frac{|\sin^{2\mf{n}}((\tfrac{1}{\mf{n}}\epsilon \log T)u)|}{(\tfrac{1}{\mf{n}}\epsilon \log T)^{2\mf{n}-1} u^{2\mf{n}-1}}du\leqs \frac{1}		
	{\tfrac1n\epsilon\log T}\int_{\mb{R}} \frac{|\sin^{2\mf{n}}(u)|}{u^{2\mf{n}-1}}du\leqs \frac{\widehat{K_{\mf{n}-1}}(0)}{\tfrac{1}{\mf{n}}\epsilon \log T}.
	\]
	Finally, for large $\mf{n}$ and $c>3$ we have 
	\begin{align*}
	\widehat{K_\mf{n}}(0)
	= &
	\int_{\mb{R}}\frac{\sin^{2\mf{n}}(x)}{x^{2\mf{n}}}dx
	=
	\int_{|x|\leqs c\sqrt{{\log \mf{n}}/{{\mf{n}}}}}\frac{\sin^{2\mf{n}}(x)}{x^{2\mf{n}}}dx+O(\mf{n}^{-c/3})
	\end{align*} 
	since $\sin x/x$ is decreasing over the interval $[0,\pi]$, $\sin^{2\mf{n}}(c\sqrt{\log \mf{n}/\mf{n}})/(c\sqrt{\log \mf{n}/\mf{n}})^{2\mf{n}}\ll \mf{n}^{-c/3}$ by Taylor 		
	expansions, and the integral over $|x| \geqs \pi$ is $\ll \pi^{-2\mf{n}}$. On applying Taylor expansions again, we find that the above integral is
	\begin{align*}
	& \int_{|x|\leqs c\sqrt{{\log \mf{n}}/{{\mf{n}}}}}(1-\tfrac16x^2+O(\tfrac{(\log \mf{n})^2}{\mf{n}^2}))^{2\mf{n}}dx 
	\sim \frac{1}{\sqrt{\mf{n}}}\int_\mb{R} e^{-x^2/3}dx
	=
	\sqrt{\frac{{3\pi}}{{\mf{n}}}}.
	\end{align*}

	\end{proof}

	\begin{lemma}\label{main lemma}
		For large $T$, $c_q\leqs \sqrt{\phi(q)}$, $\mf{n}\ll (\log N)^{1/2-\delta}$ with small $\delta>0$, and $q\ll (\log_2 T)^A$ we have 
		\[
		\int_{-\infty}^{\infty}|R(t)|^2\Phi(t/T)dt\ll T\sum_{n\in \mb{N}}f(n)^2
		\]
		and 
		\begin{align*}
		&\int_{-\infty}^{\infty}\left(\sum_{n=1}^{\infty}\frac{\widehat{K_\mf{n}}(\log n)a(n)}{n^{1/2+it}}\right)|R(t)|^2\Phi(t/T)dt\\
	& \gg  \widehat{K_\mf{n}}(0)T\exp\left(c_q \gamma(1+o(1)) \sqrt{\frac{\log N \log_3 N}{\log_2 N}}\right)\sum_{n\in \mb{N}}f(n)^2.
		\end{align*}
	
		\end{lemma}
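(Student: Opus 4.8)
The plan is to follow the resonator computations in \cite[Sec.~3]{BS1} and \cite{BS2}, now carrying the coefficients $a(n)$ of $\zeta_\mb K$ and the weight $K_\mf n$, and exploiting that $\widehat\Phi>0$ together with the fact that $\mc M_q$ is closed under passing to divisors (indeed $\text{supp}(f)$ is, and so is the complement of each $M_{k,q}$, since having fewer than $a\log N/(k^2\log_3 N)$ prime divisors in $P_{k,q}$ is preserved under division). For the first estimate I would expand $|R(t)|^2=\sum_{m,m'\in\mc M_q'}r(m)r(m')(m/m')^{-it}$ and use $\int_{\mb R}(m/m')^{-it}\Phi(t/T)\,dt=T\widehat\Phi(T\log(m/m'))=\sqrt{2\pi}\,T e^{-T^2\log^2(m/m')/2}$. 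The diagonal contributes $\sqrt{2\pi}\,T\sum_m r(m)^2$; since the points of $\mc M_q'$ lie one per interval $[(1+T^{-1})^j,(1+T^{-1})^{j+1})$, any logarithmic window of width $\asymp T^{-1}$ contains $O(1)$ of them, so the Gaussian decay of $\widehat\Phi$ and $r(m)r(m')\leqs\tfrac12(r(m)^2+r(m')^2)$ bound the off-diagonal by $O(T\sum_m r(m)^2)$ as well. Since each $n\in\mc M_q$ is counted in $r(m_j)^2$ for only $O(1)$ values of $j$, one has $\sum_m r(m)^2\ll\sum_{n\in\mc M_q}f(n)^2\leqs\sum_n f(n)^2$, which gives the first bound.

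For the second estimate, write $D(t)\coloneqq\sum_n\widehat{K_\mf n}(\log n)a(n)n^{-1/2-it}$, a finite sum since $\widehat{K_\mf n}$ is supported on $|v|\leqs 2\epsilon\log T$, and expand as above:
\[
\int_{\mb R}D(t)|R(t)|^2\Phi(t/T)\,dt=\sqrt{2\pi}\,T\sum_n\frac{\widehat{K_\mf n}(\log n)a(n)}{\sqrt n}\sum_{j,k}r(m_j)r(m_k)e^{-T^2\log^2(nm_j/m_k)/2},
\]
where every summand is $\geqs 0$. I would lower-bound the inner double sum, for each fixed $n$, by retaining only the configurations coming from an integer $b\in\mc M_q$ with $n\mid b$ such that $b/n$ lies in the interval $[(1+T^{-1})^{j},(1+T^{-1})^{j+1})$ attached to $m_j$ and $b$ lies in the corresponding interval attached to $m_k$; such a $b$ occurs for exactly one pair $(j,k)$, and then $b/n\in\mc M_q$, $|\log(m_j/(b/n))|\ll T^{-1}$, $|\log(m_k/b)|\ll T^{-1}$, hence $|\log(nm_j/m_k)|\ll T^{-1}$ and the exponential is $\gg 1$. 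Applying the Cauchy--Schwarz inequality to the defining sums of $r(m_j)$ and $r(m_k)$ along the matched pairs $(b/n,b)$ then yields, for every $n$,
\[
\sum_{j,k}r(m_j)r(m_k)e^{-T^2\log^2(nm_j/m_k)/2}\gg\sum_{\substack{b\in\mc M_q\\ n\mid b}}f(b/n)f(b).
\]

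Summing over $n\leqs T^{\epsilon/(2\mf n)}$ — for which \eqref{K der bound} and \eqref{K0 asymp} give $\widehat{K_\mf n}(\log n)\geqs\widehat{K_\mf n}(0)-\tfrac12\widehat{K_{\mf n-1}}(0)\gg\widehat{K_\mf n}(0)$ — and using $a(n)=\phi(q)^{\omega(n)}$ and $f(b/n)f(b)=f(n)f(b/n)^2$ for squarefree $n$ supported on $P_q$, one gets
\[
\int_{\mb R}D(t)|R(t)|^2\Phi(t/T)\,dt\gg\widehat{K_\mf n}(0)\,T\sum_{b\in\mc M_q}\sum_{\substack{d\mid b\\ b/d\leqs T^{\epsilon/(2\mf n)}}}\frac{a(b/d)f(b/d)f(d)^2}{\sqrt{b/d}}.
\]
Two final steps conclude. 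First, the restriction $b/d\leqs T^{\epsilon/(2\mf n)}$ may be removed at the cost of a factor $1+o(1)$: the mass of this sum is carried by $b/d$ with $\omega(b/d)$ of order $\gamma c_q\sqrt{\log N\log_3 N/\log_2 N}$ (the mean of the underlying Poisson-type weight, cf.\ the proof of Lemma~\ref{A lem}), whereas $b/d>T^{\epsilon/(2\mf n)}$ forces $\omega(b/d)\gg\log N/(\mf n\log_2 N)$, and the ratio $\sqrt{\log N}/(\mf n c_q\sqrt{\log_2 N\log_3 N})$ of the latter to the former tends to infinity precisely when $\mf n\ll(\log N)^{1/2-\delta}$ and $q\ll(\log_2 T)^A$ (so $c_q\leqs\sqrt{\phi(q)}\ll(\log_2 N)^{A/2}$). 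Second, by the identity $\sum_{d\mid b}\frac{a(b/d)f(b/d)f(d)^2}{\sqrt{b/d}}=\frac{f(b)}{\sqrt b}\sum_{d\mid b}a(b/d)f(d)\sqrt d$ together with Lemma~\ref{restrict lem} and the definition of $A_{N,q}$, the full sum over $b\in\mc M_q$ equals $(1+o(1))A_{N,q}\sum_n f(n)^2$. Invoking Lemma~\ref{A lem} to write $A_{N,q}=\exp\big((\gamma c_q+o(1))\sqrt{\log N\log_3 N/\log_2 N}\big)$ gives the claimed bound.

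The main obstacle is the combinatorial lower bound of the second paragraph, where the divisor structure $b\mapsto(b/n,b)$, the resonator windows, and the phases $\log(nm_j/m_k)$ must be matched simultaneously, and Cauchy--Schwarz applied windowwise while keeping each term of the resonator double sum used at most once; the divisor-closure of $\mc M_q$ is what makes $b/n$ available. The next most delicate point is the truncation step, which is exactly where the range $\mf n\ll(\log N)^{1/2-\delta}$ is needed, and throughout one must keep every $o(1)$ uniform in $q$ by means of the Siegel--Walfisz estimates already used in Lemmas~\ref{A lem} and~\ref{restrict lem}.
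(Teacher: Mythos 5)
Your proposal follows the same route as the paper: expand $\int D(t)|R(t)|^2\Phi(t/T)\,dt$ via the Gaussian kernel, use positivity and the derivative bound on $\widehat{K_\mf n}$ to restrict the length to $n\leqs T^{\epsilon/O(\mf n)}$ while keeping $\widehat{K_\mf n}(\log n)\gg\widehat{K_\mf n}(0)$, lower-bound the resulting double sum over resonator points by the matched divisor pairs $(b/n,b)$ via Cauchy--Schwarz on each window (exactly as in equation (21) of \cite{BS2}, relying on the divisor-closedness of $\mc M_q$ which you correctly single out), and finally fold the result back into the quantity $A_{N,q}$ using the multiplicativity of $f$ together with Lemmas~\ref{A lem} and~\ref{restrict lem}. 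All of that matches the paper, up to harmless changes of constants in the cutoff.

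The one place where your argument has a genuine gap is the removal of the truncation $b/d\leqs T^{\epsilon/(2\mf n)}$. You assert that the mass is carried by $b/d$ with $\omega(b/d)$ near the Poisson mean $\gamma c_q\sqrt{\log N\log_3 N/\log_2 N}$, and observe that the threshold $\log N/(\mf n\log_2 N)$ dwarfs this mean when $\mf n\ll(\log N)^{1/2-\delta}$. That is the right heuristic and identifies the correct range of $\mf n$, but it is not a proof: concentration of the multiplicative weight around its mean has to be quantified, and the observation that the threshold is far above the mean does not by itself yield a bound on the tail that beats the factor $A_{N,q}$ one is dividing by. The paper closes this gap with Rankin's trick: after rewriting the tail as $\sum_{n\in\mc M_q}f(n)^2\sum_{k\mid n,\,k>T^{\epsilon/3\mf n}}a(k)/(f(k)\sqrt k)$, one inserts $(k/T^{\epsilon/3\mf n})^{\delta}>1$ to get the explicit saving $T^{-\epsilon\delta/(3\mf n)}$, and then controls $\prod_{p\mid n}(1+a(p)/(f(p)p^{1/2-\delta}))$ using $\phi(q)/(f(p)p^{1/2-\delta})\ll(\log N)^{-1/2+\delta}$ together with $\omega(n)\ll\log T/\log_3 T$ on $\mc M_q$. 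This makes the dependence on $\mf n$ (and hence on $q$ via $\mf n=2\phi(q)$) completely explicit; you should replace your concentration heuristic with this, or an equivalent large-deviation estimate, to complete the proof.
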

	\begin{proof}
	The first part follows similarly to Lemma 5 of \cite{BS2}. For the second part we have
		\begin{align*}
		&\int_{-\infty}^{\infty}\left(\sum_{n=1}^{\infty}\frac{\widehat{K_\mf{n}}(\log n)a(n)}{n^{1/2+it}}\right)|R(t)|^2\Phi(t/T)dt\\
		&=\sqrt{2\pi} T \sum_{m,n \in \mc{M}_q'}\sum_{k=1}^{\infty}\frac{\widehat{K_\mf{n}}(\log k) a(k)r(m)r(n)}{k^{1/2}}\Phi\left(T\log \frac{km}{n}\right).
		\end{align*}
		We wish to lower bound this using positivity. From the properties of $\widehat{K_n}(v)$ given in Lemma \ref{K trans lem}, in particular the derivative bound \eqref{K 		der bound}, we see that if $c_\mf{n}\coloneqq \widehat{K_{\mf{n}}}(0)/\widehat{K_{\mf{n}-1}}(0)$, then $\widehat{K_{\mf{n}}}(\tfrac12c_\mf{n}\cdot\tfrac{1}{\mf{n}}\epsilon 		\log T)\geqs \tfrac12\widehat{K_\mf{n}}(0)$. By \eqref{K0 asymp} we have $c_\mf{n}\sim 1$ and hence on restricting the sum to $\log k\leqs \tfrac{1}{3\mf{n}}			\epsilon \log T$, say, the above is 
		\begin{align*}
		&\gg \widehat{K_\mf{n}}(0)T\sum_{m, n\in \mc{M}_q'}\sum_{k\leqs T^{\epsilon/3\mf{n}}}\frac{a(k)r(m)r(n)}{k^{1/2}}\Phi\left(T\log \frac{km}{n}\right)
		\end{align*}
		by positivity. 
		
		 We now lower bound by summing over those integers $m,n\in\mc{M}_q'$ such that $|km/n-1|\leqs 3/T$. 
		For such terms we have $\Phi(T\log(km/n))\gg 1$ and also, similarly to equation (21) of \cite{BS2}, we find  
		\[
		\sum_{m,n\in\mc{M}_q', \,\,|km/n-1|\leqs 3/T}r(m)r(n)\geqs \sum_{m,n\in\mc{M}_q', km=n}f(m)f(n).
		\]
		This gives the lower bound
			\begin{align*}
		 \gg T\sum_{\substack{m, n\in \mc{M}_q, k\leqs T^{\epsilon/3\mf{n}}\\km=n}}\frac{a(k)f(m)f(n)}{k^{1/2}}	
		=
		T\sum_{n\in \mc{M}_q}\frac{f(n)}{n^{1/2}}\sum_{d|n, d\geqs n/T^{\epsilon/3\mf{n}}}a(n/d)f(d)d^{1/2}.
		\end{align*}
		
		It remains to remove the restriction on the divisor $d$ since then the result will follow from Lemmas \ref{A lem} and \ref{restrict lem}.
		Following Lemma 3 of \cite{BS1}, we note that since $f(n)=f((n/d)d)=f(n/d)f(d)$, the tail of this last sum is given by 
		\[
		\sum_{n\in \mc{M}_q}{f(n)^2}\sum_{d|n, d\geqs T^{\epsilon/3\mf{n}}}\frac{a(d)}{f(d)d^{1/2}}
		\leqs 
		T^{-\epsilon\delta/12\mf{n}}\sum_{n\in \mc{M}_q}{f(n)^2}\prod_{p|n}\bigg(1+\frac{a(p)}{f(p)p^{1/2-\delta}}\bigg).
		\]
		Since 
		\[
		\frac{\phi(q)}{f(p)p^{1/2-\delta}}
		\leqs 
		\frac{\phi(q)(\log_2 N)^\gamma p^{\delta}}{c_q\sqrt{\log N\log_2N/\log_3N}}\ll (\log N)^{-1/2+\delta}
		\]
		for $p\in P_q$ and there are $\ll \log T/\log_3 T$ prime factors of any given $n$ in $\mc{M}_q$, the above is 
		\[
		\ll \sum_{n\in\mc{M}_q}f(n)^2\exp\Big(-\tfrac{\epsilon\delta}{12\mf{n}}\log T+o\Big(\frac{(\log T)^{1/2+\delta}}{\log_3 T}\Big)\Big)
		\]
		which is $o(\sum_{n\in\mc{M}_q}f(n)^2)$ when $\mf{n}\ll (\log T)^{1/2-\delta}$  and hence certainly $o(A_{N,q}\sum_{n\in\mb{N}}f(n)^2)$.
		\end{proof}

		\begin{proof}[Proof of Theorem \ref{main theorem}]
		
		Note that
		\begin{multline}\label{max pull}
		\bigg|\int_{1\leqs |t|\leqs T\log T}\int_{|u|\leqs T^{1/2}}\zeta_{\mb{K}}(1/2+i(t+u))K_{\mf{n}}(u)|R(t)|^2\Phi(t/T)dt du\bigg|
		\\
		\leqs \widehat{K_\mf{n}}(0)\max_{t\in[-T^{1/2},T\log T+T^{1/2}]}|\zeta_\mb{K}(\tfrac12+it)|\int_\mb{R}|R(t)|^2\Phi(t/T)dt.
		\end{multline}
		Thus it remains to extend the integrals on the left to $\mb{R}$. 
		
		Consider the region $|t|\leqs 1$ first. By the hybrid convexity bound $L(\tfrac12+iv,\chi)\ll (q(1+|v|))^{1/4}$ (or better, see \cite{HB})  
		we have $\zeta_\mb{K}(\tfrac12+iv)\ll (cq(1+|v|))^{\phi(q)/4}$. This leads to a contribution 
		\begin{align*}
		\ll  
		q^{\phi(q)}R(0)^2\int_{|u|\leqs T^{1/2}}(1+|u|)^{\phi(q)/4}K_\mf{n}(u)du
		\ll &
		q^{\phi(q)}T^{\eta}\sum_{n\in\mb{N}}f(n)^2 
		\end{align*}
		on taking 
		\[
		\mf{n}=2\phi(q),
		\] 		
		say. 
		We next extend the $u$ integral to $\mb{R}$. Using that $K_\mf{n}(u)\ll 1/u^{4\phi(q)}$ along with the convexity bound gives 
		\begin{align*}
		\ll &
		q^{\phi(q)}\int_{|t|\leqs T\log T}\int_{|u|\geqs T^{1/2}}
		(1+|t+u|)^{\phi(q)/4}K_\mf{n}(u)|R(t)|^2\Phi(t/T)dudt
		\\
		\ll &
		q^{\phi(q)}(T\log T)^{\phi(q)/4}T^{(1+\phi(q)/4-4\phi(q))/2}\int_\mb{R}|R(t)|^2\Phi(t/T)dt
		\\
		\ll & q^{\phi(q)}T^{-\phi(q)}\sum_{n\in\mb{N}}f(n)^2.
		\end{align*}
		Finally, extending the remaining $t$ integral to $\mb{R}$ gives an error
		\begin{align*}
		\ll &
		q^{\phi(q)}\int_{|t|\geqs T\log T}\bigg(\int_{\mb{R}}|u|^{\phi(q)/4}K_\mf{n}(u)du \bigg) \,\,|t|^{\phi(q)/4}|R(t)|^2\Phi(t/T)dt
		\\
		\ll &
		q^{\phi(q)}T^{\phi(q)/4+1+\eta}\Phi(\tfrac12\log T)\sum_{n\in\mb{N}}f(n)^2.
		\end{align*}
		
		For $q\ll (\log_2 T)^A$ these errors are all $o(T\sum_{n\in\mb{N}}f(n)^2)$ and we find 
		\begin{align*}
		&\int_{T^{\beta}\leqs |t|\leqs T \log T}\int_{|u|\leqs T^\beta/2}\zeta_{\mb{K}}(\tfrac12+i(t+u))K(u)|R(t)|^2\Phi(t/T)dt du\\ &=\int_{-\infty}^{\infty}\int_{-\infty}^{\infty}   
		\zeta_{\mb{K}}(\tfrac12+i(t+u))K(u)|R(t)|^2\Phi(t/T)dt du+o\Big(T\sum_{n\in\mb{N}}f(n)^2\Big). 
		\end{align*}
		Thus, on taking $q\ll(\log_2 T)^A$ the condition on $\mf{n}(=2\phi(q))$ in Lemma \ref{main lemma} is satisfied. Along with Lemma \ref{convolution formula} 		and 	\eqref{max pull} this gives 
		\begin{align*}
		 \max_{-T^{1/2}\leqs t\leqs T\log T+T^{1/2}}|\zeta_\mb{K}(\tfrac12+it)| 
		\gg &
		\exp\left(c_q \gamma(1+o(1)) \sqrt{\frac{\log N \log_3 N}{\log_2 N}}\right)
		\\
		&
		+
		o( \mathrm{Res}_{s=1}\zeta_\mb{K}(s))+o(1)
		\end{align*}
		since
		 \[
		 \int_{-\infty}^{\infty}|K_\mf{n}(-t-i/2)||R(t)|^2\phi(t/T)dt\ll T^{\eta+\epsilon}\sum_{n\in\mb{N}}f(n)^2 .
		 \]
		
		It is known from \cite{Ram} that
		\[
		\text{Res}_{s=1}\zeta_{\mb{K}}(s)\leqs \left(\frac{\log |d_{\mb {K}}|}{2(\phi(q)-1)}+\kappa\right)^{\phi(q)-1},
		\]
		where $\kappa= (5 -2 \log 6)/2 = 0.70824\cdots$.
		Since the discriminant satisfies $d_\mb{K}\ll q^{\phi(q)}$ (see Proposition 2.7 of \cite{Wash}), we have 
		\[
		\text{Res}_{s=1}\zeta_{\mb{K}}(s)\ll \exp(\log_3 T\log_5 T)
		\]  
		which is negligible. We now take $c_q=\sqrt{\phi(q)}$, $\eta<1$ and let $\gamma\to 1$ to complete the proof.
		Note that varying $T$ by a factor of a logarithm only affects the lower order terms in the exponential and so the result holds for $t$ in $[0,T]$, as stated. 
		\end{proof}

	\end{document}